\theoremstyle{plain}
\DeclareMathOperator{\sll}{SL}
\DeclareMathOperator{\gl}{GL}
\newtheorem{theorem}{Theorem}[section]
\newtheorem{fact}[theorem]{Fact}
\newtheorem*{theorem*}{Theorem}
\newtheorem*{maintheorem*}{Main Theorem}
\newtheorem*{lemma*}{Lemma}
\newtheorem*{intconj*}{Intermediate Conjecture}
\newtheorem{prop}[theorem]{Proposition}
\newtheorem*{prop*}{Proposition}
\newtheorem*{conj*}{Principal Conjecture}
\newtheorem*{thm3.2*}{Theorem 3.2}
\newtheorem*{thm3.3*}{Theorem 3.3}
\newtheorem*{prop2.11*}{Proposition 2.11}
\theoremstyle{definition}
\newtheorem{example}[theorem]{Example}
\newtheorem{remark}[theorem]{Remark}
\newtheorem*{remark*}{Remark}
\newtheorem*{claim*}{Claim}
\newcommand{\los}{\L o\'{s}'s Theorem}
\begin{document}

\title[On Sylow theory for  linear pseudofinite groups]{On Sylow theory for linear  pseudofinite groups}
\author[P. U\u{G}URLU]{P\i nar U\u{g}urlu Kowalski} 
\thanks{2020 AMS Mathematics Subject Classification: 03C20, 20G99}
\thanks{\textit{Keywords and phrases}. Pseudofinite group, Sylow $p$-subgroup, Linear group.}
\address{Istanbul Bilgi  University \\
Yahya K\"{o}pr\"{u}s\"{u} Sok. No: 1
Dolapdere \\34440\\
Istanbul\\
Turkey}
 \email{pinar.ugurlu@bilgi.edu.tr}

\maketitle

\begin{abstract}
We prove the conjugacy of Sylow $p$-subgroups of linear pseudofinite  groups under the assumption of the existence of a finite Sylow $p$-subgroup.  We also give an example of a linear pseudofinite  group with non-conjugate Sylow $2$-subgroups.
\end{abstract}

\section*{Introduction}
Since pseudofinite groups are models of the first order theory of finite groups, it is natural to try to generalise some results related to finite groups to the pseudofinite case. One of the nice properties shared by all finite groups is the conjugacy of Sylow $p$-subgroups. In this work, we show that this is too good to be true for pseudofinite groups, even for linear ones, by providing an example of a linear pseudofinite group with non-conjugate Sylow $2$-subgroups (see Example~\ref{ex2}). However, under  extra assumptions, one can obtain some conjugacy results. In \cite{kow}, the author proved the conjugacy of Sylow $2$-subgroups of pseudofinite $\mathfrak{M}_c$-groups (groups satisfying descending chain condition on centralizers) provided that there is a finite Sylow $2$-subgroup. Since linear groups are $\mathfrak{M}_c$-groups, this results gives the desired conjugacy of Sylow $2$-subgroups for linear pseudofinite  groups once there is a finite Sylow $2$-subgroup. In this paper, we generalise this conditional conjugacy result  to Sylow $p$-subgroups (for any prime $p$) in the case of linear  pseudofinite  groups. 

The main result of this paper is stated below.

\begin{thm3.2*}
If  a linear  pseudofinite  group $G$  has a finite Sylow $p$-subgroup, then all Sylow $p$-subgroups of $G$ are conjugate and hence finite.
\end{thm3.2*}

The structure of this paper is as follows.

In the first section, we recall some basic notions from group theory, we briefly introduce pseudofinite groups  and  we fix our terminology and notation.

In the second section, we summarize some known (non-)conjugacy results for Sylow $p$-subgroups of some particular  groups.

In the last section, we state and prove the main theorem (Theorem 3.2) and provide some examples of linear pseudofinite groups with conjugate and non-conjugate Sylow $2$-subgroups. We also comment on a question stated by Wagner in \cite{wagner2}.

\section{Preliminaries}
In this section, we recall the definitions of some basic notions from group theory and  we also recall the notion of pseudofinite groups very briefly without going into details. By doing so, we will fix our terminology and notation. Note that we assume that the reader is familiar with the basic notions in model theory. We refer the reader to the books \cite{bells} and \cite{changk} for detailed information about the ultraproduct construction (which is essential to understand pseudofinite groups) and to \cite{wilson} and \cite{ugurlu} for more information about pseudofinite groups.

 A  \emph{linear group} is a group which is isomorphic to a subgroup of $\gl_n(F)$  where $\gl_n(F)$ denotes the general linear group over a field $F$. It is easy to observe that finite groups are linear.  A group $G$ is called a \emph{$p$-group} for a prime number $p$, if each element of $G$ is a \emph{$p$-element}, that is, each element  has order $p^n$ for a natural number  $n$. More generally, $G$ is called  \emph{periodic} if every element of it has finite order. An example of an infinite $p$-group is the Pr\"{u}fer $p$-group: $$C_{p^{\infty}}= \bigcup_{n=1}^{\infty} C_{p^n},$$ where $C_k$ denotes the cyclic group of order $k$ for $k>0$. A \emph{Sylow $p$-subgroup} of a group $G$ is a  $p$-subgroup of  $G$ which is maximal with respect to inclusion. A group $G$ is called \emph{locally finite} if every finitely generated subgroup of $G$ is finite. Note that locally finite groups are necessarily periodic, but, the converse is not necessarily true in general. However, for linear groups, these two notions coincide by the following result:


 \begin{fact} [Schur \cite{schur1911ueber}] \label{lf}
Periodic linear groups are locally finite.
\end{fact}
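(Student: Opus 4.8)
The plan is to argue directly: since local finiteness is a statement about finitely generated subgroups, it suffices to show that every finitely generated periodic subgroup $H=\langle g_1,\dots,g_k\rangle$ of $\gl_n(F)$ is finite. First I would observe that the matrix entries of the $g_i$ and of their inverses generate a subfield $F_0\le F$ that is finitely generated over the prime field, and that $H\le \gl_n(F_0)$; so from now on the ambient field $F_0$ may be assumed finitely generated over $\mathbb{Q}$ or over $\mathbb{F}_p$. The whole problem thus reduces to showing that a finitely generated periodic linear group over a finitely generated field is finite.

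The heart of the argument, and the step I expect to be the main obstacle, is to show that the characteristic polynomials of the elements of $H$ take only finitely many values. Since each $h\in H$ has finite order, its eigenvalues (in $\overline{F_0}$) are roots of unity, and they are roots of its characteristic polynomial $\chi_h\in F_0[X]$, so each eigenvalue $\lambda$ satisfies $[F_0(\lambda):F_0]\le n$. The key number-theoretic input is that a finitely generated field contains only finitely many roots of unity: the algebraic closure $K$ of the prime field inside $F_0$ is a \emph{number field} in characteristic $0$ and a \emph{finite field} in characteristic $p$. Using that roots of unity are algebraic over the prime field and that $K$ is relatively algebraically closed in $F_0$, the bound $[F_0(\lambda):F_0]\le n$ gives a bound on $[K(\lambda):K]$ and hence on the degree of $\lambda$ over the prime field. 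By the growth of Euler's totient function (resp.\ by the fact that bounded-degree elements lie in a single finite field) only finitely many roots of unity can occur, so the eigenvalues, and therefore the characteristic polynomials and in particular the traces, of the elements of $H$ range over a finite set.

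Finally I would combine this with a Burnside-type trace argument. Extending scalars to $\overline{F_0}$ and choosing a composition series of the $H$-module $\overline{F_0}^{\,n}$, each factor $W_i/W_{i-1}$ is an irreducible (hence, over the algebraically closed field, absolutely irreducible) $H$-module on which the eigenvalues of $h$ are again among the finitely many roots of unity found above, so the image of $H$ in $\gl(W_i/W_{i-1})$ is irreducible with only finitely many traces. Burnside's theorem, that an irreducible subalgebra of the full matrix algebra over an algebraically closed field is the whole algebra, lets me choose a basis $h_1,\dots,h_{m^2}$ of the matrix algebra from the image and realise the injectivity of $h\mapsto(\mathrm{tr}(hh_j))_j$ via nondegeneracy of the trace form; each coordinate is a trace of a group element, so every such image is finite, whence the image $\overline{H}$ of $H$ in the product of the factors is finite. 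It then remains to control the kernel $U$, which consists of unipotent elements (identity on each factor). In characteristic $0$ a nontrivial unipotent element has infinite order, so periodicity forces $U=1$ and $H\cong\overline{H}$ is finite; in characteristic $p$ the group $U$ is finitely generated (being of finite index in $H$), nilpotent (being unipotent), and torsion, hence finite, so $H$ is finite as well. I expect the field-theoretic finiteness of the eigenvalues to be the genuinely substantial point, with the positive-characteristic unipotent radical the only real complication beyond the classical complex case.
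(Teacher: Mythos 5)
The paper offers no proof of this statement: it is imported as a black-box Fact, cited to Schur's 1911 paper, and used later only inside the proof of Theorem~\ref{bounded2}. So there is no internal argument to match yours against; what you have written is a reconstruction of the classical Burnside--Schur proof, and it is correct. Your reduction to a finitely generated $H\le \gl_n(F_0)$ with $F_0$ finitely generated over the prime field, the finiteness of the possible eigenvalues, the Burnside trace argument on composition factors, and the two-case treatment of the unipotent kernel are all sound. Two points deserve explicit mention rather than being left implicit: (i) transferring the bound $[F_0(\lambda):F_0]\le n$ to $[K(\lambda):K]$ uses the lemma that the minimal polynomial over the relatively algebraically closed subfield $K$ of a \emph{separable} algebraic element remains irreducible over $F_0$; separability is automatic here because roots of unity have order prime to the characteristic, but that is exactly where it enters; (ii) in characteristic $p$, ``finitely generated'' for the kernel $U$ rests on Schreier's lemma (a finite-index subgroup of a finitely generated group is finitely generated) together with the finiteness of finitely generated nilpotent torsion groups --- both standard, but they are the load-bearing facts. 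One historical caveat: Schur's 1911 theorem is the characteristic-$0$ case (subgroups of $\gl_n(\mathbb{C})$); the arbitrary-field statement quoted in the paper is a later extension along precisely the lines you describe (it can be found, e.g., in Wehrfritz's book on infinite linear groups). Your argument covers both characteristics, so it actually establishes the Fact in the generality the paper needs --- arguably more than the citation as given does.
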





\emph{Pseudofinite groups} are defined as infinite models of the common theory of finite groups.  It can be shown that any pseudofinite group is elementarily equivalent
to a non-principal ultraproduct of finite groups, by a suitable choice of an ultrafilter (see \cite{wilson}).  The importance of the ultraproduct construction is expressed by  \los ~ \cite{los1} which states that a first order formula is satisfied in the ultraproduct if and only if it is satisfied in the structures indexed by a set belonging to the ultrafilter.  In particular, the first order properties of the ultraproduct  are determined by the first order properties of the structures in the ultraproduct together with the choice of an ultrafilter.  Throughout the text,  if $G$ is a pseudofinite group then we write $G \equiv \prod_{i\in I} G_i/\mathcal{U}$ which means that $G$ is elementarily equivalent to the ultraproduct of the finite groups $G_i, \ i\in I$,  over a non-principal ultrafilter $\mathcal{U}$ on the index set $I$. Moreover, if a first order property holds in the finite groups indexed by a set belonging to the ultrafilter, then we say that this property holds in \emph{almost all} of the finite groups in the ultraproduct.


Since we talk about linear pseudofinite groups in this paper, it is worth to mention that unlike in the case of finite groups, there are non-linear pseudofinite groups. For an example we refer the reader to  \cite{kow}.

\section{On  (non-)conjugacy results for Sylow $p$-subgroups}
In this section, we summarize some known results about (non-)conjugacy of Sylow $p$-subgroups for some important classes of groups. We do not claim that we  include all the related results from the literature. We refer the reader to \cite{algebra} for a more comprehensive survey.

\subsection{Conjugacy results}

First we list some facts about conjugacy of Sylow $p$-subgroups in linear groups.

\begin{fact} \label{conjugacy} Sylow $p$-subgroups are conjugate in
\begin{enumerate}
   \item  periodic linear groups (Platonov \cite{platonov}),
      \item linear algebraic  groups over  algebraically closed fields (Platonov \cite{platonov}),
\item $\gl_n(K)$ except when $p=2$, char$(K)\neq 2$ and the following conditions are satisfied (Vol'vachev \cite{volvachev2}):
\begin{itemize}
  \item[${\bf (V_1)}$] $-1$ can be written as a sum of two squares in $K$,
  \item[${\bf (V_2)}$] $K$ has no element of multiplicative order $4$,
  \item[${\bf (V_3)}$]  Every $2$-element $a+bi$ in $K(i)$, where $i^2=-1$, satisfies $(a+bi)(a-bi)=1$.
\end{itemize}
\end{enumerate}
\end{fact}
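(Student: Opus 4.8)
The plan is to treat the three parts separately, since each rests on different machinery, and to extract (a) as a consequence of (b). The organising principle throughout is to split according to the characteristic of the ground field relative to $p$: when $p$ equals the characteristic the $p$-elements are unipotent and the whole question becomes one about maximal unipotent subgroups, whereas when $p$ differs from the characteristic the $p$-elements are semisimple and the question becomes one about $p$-torsion inside maximal tori and their normalizers. Separating these regimes first should make clear that the only genuinely subtle situation is $p=2$ with $\operatorname{char}\neq 2$, which is exactly the exceptional case flagged in part (c).

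For part (b), let $G$ be a linear algebraic group over an algebraically closed field $k$. If $\operatorname{char}(k)=p$, I would argue that a Sylow $p$-subgroup is precisely a maximal unipotent subgroup: every unipotent element is a $p$-element, and a maximal $p$-subgroup cannot contain nontrivial semisimple $p$-torsion since $k^{\times}$ has none in this characteristic. The maximal unipotent subgroups are the unipotent radicals of the Borel subgroups, so their conjugacy follows from the conjugacy of Borel subgroups, itself a consequence of the Borel fixed point theorem applied to the complete variety $G/B$. If $\operatorname{char}(k)\neq p$, the $p$-elements are semisimple, the $p$-torsion of $k^{\times}$ is a Pr\"ufer group $C_{p^{\infty}}$, and each Sylow $p$-subgroup meets a maximal torus $T$ in its full $p$-torsion; here I would combine conjugacy of maximal tori with an analysis of the $p$-elements of the normalizer $N_G(T)$ coming from the Weyl group to assemble a single conjugating element. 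This semisimple sub-case is where the real work sits, because the Weyl-group contribution can be genuinely nontrivial precisely when $p=2$.

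For part (a), the strategy is descent from the algebraic closure. By Schur's theorem (Fact~\ref{lf}) a periodic linear group $G$ is locally finite, hence embeds in $\gl_n(\bar F)$; I would then pass to the Zariski closure $\bar G$, which is a linear algebraic group to which part (b) applies, intersect the algebraic-group Sylow subgroups with $G$, and use that every finitely generated subgroup of $G$ is finite to approximate the conjugation locally. The delicate step is the descent itself: showing that the element conjugating the closures can be chosen inside the abstract subgroup $G$ rather than merely inside $\bar G$, which requires controlling how the finite approximations fit together.

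Part (c) I expect to be the main obstacle, and I would isolate it to $p=2$, $\operatorname{char}(K)\neq 2$, since every other combination of prime and characteristic is handled by the unipotent or the odd-prime semisimple arguments above. For such $K$ the Sylow $2$-subgroups are built from $2$-power torsion in $K^{\times}$ together with the arithmetic of the quadratic extension $K(i)$, and any failure of conjugacy must come from an arithmetic obstruction in how $-1$ decomposes and how the norm form on $K(i)/K$ acts on $2$-elements. The goal would be to show that the three conditions $\mathbf{(V_1)}$--$\mathbf{(V_3)}$ are exactly the obstructions to gluing a family of locally conjugate $2$-subgroups into one conjugacy class, essentially a quaternionic invariant attached to the embedding of the $2$-torus; outside these conditions one verifies that the naive conjugation always succeeds. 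Pinning down that this arithmetic invariant is both necessary and sufficient is the hardest point of the whole statement.
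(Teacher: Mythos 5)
The statement you are proving is quoted in the paper as a \emph{Fact}, with attributions to Platonov and Vol'vachev; the paper contains no proof of it, so your attempt can only be judged on its own merits. Judged that way, it has a genuine gap, and it is concentrated exactly where you place the least detail: part (c). Your reduction claims that ``every other combination of prime and characteristic is handled by the unipotent or the odd-prime semisimple arguments above,'' but those arguments are the proofs of (a) and (b), which apply only to periodic groups and to groups over algebraically closed fields. The group $\gl_n(\mathbb{Q})$ with $p=3$ is neither: it is not periodic, and over a non-closed field $K$ the maximal tori of $\gl_n$ fall into many $K$-conjugacy classes, while the available $p$-torsion is governed by the arithmetic of the cyclotomic extensions $K(\zeta_{p^k})$, not by $K^{\times}$ alone. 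Vol'vachev's theorem is precisely a structural classification of the maximal $p$-subgroups of $\gl_n(K)$ (built from the torsion of cyclotomic extensions wreathed with Sylow $p$-subgroups of symmetric groups) followed by an arithmetic conjugacy argument; none of this is reducible to the algebraically closed case, and your paragraph on (c) --- the ``quaternionic invariant'' --- restates what must be shown (that $\mathbf{(V_1)}$--$\mathbf{(V_3)}$ are necessary and sufficient for failure) without any mechanism for proving either direction.

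The descent in part (a) also fails as described. A Sylow $p$-subgroup of $G$ need not be of the form $\bar{P}\cap G$ for $\bar{P}$ a Sylow $p$-subgroup of the Zariski closure $\bar{G}$, and, more seriously, ``using that every finitely generated subgroup of $G$ is finite to approximate the conjugation locally'' cannot work on its own: the paper's Section 2.2 records Dixon's example, a direct product of infinitely many copies of $\mathrm{Sym}(3)$, which is a metabelian \emph{locally finite} group with non-conjugate Sylow $2$-subgroups, even though inside every finite subgroup all Sylow subgroups are conjugate. So local conjugacy never glues to global conjugacy by local finiteness alone; Platonov's proof needs the algebraic-group structure globally (e.g., transporters being constructible sets and $\bar{G}^{0}\cap G$ having finite index) to produce a single conjugating element inside $G$. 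Finally, a smaller but real inaccuracy in part (b): over an algebraically closed field there is no exceptional behaviour at $p=2$ at all --- the normalizer-of-torus argument works uniformly for every prime --- so the subtlety you attribute to the Weyl group at $p=2$ is an artifact of non-closed fields and belongs to part (c), not part (b).
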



Wagner  proved conjugacy results for a wider class of groups than linear groups (namely, substable groups) but with some extra assumptions. His result is stated below and for details we refer the reader to the book \cite{wagner2}.
\begin{fact}[Theorem 1.5.4, Wagner \cite{wagner2}] \label{wagnerconj}
   Let $G$  be a substable group. All Sylow $p$-subgroups of $G$ are conjugate if one of the following conditions hold:
    \begin{enumerate}
      \item $p=2$ and $G$ is ABD.
      \item $G$ is soluble by finite and ABD.
      \item $p=2$ and $G$ is periodic.
      \item Any two $p$-elements of $G$ generate a finite subgroup.
    \end{enumerate} 
\end{fact}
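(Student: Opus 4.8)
The plan is to prove Theorem 3.2 by reducing the problem to the conjugacy of Sylow $p$-subgroups in periodic linear groups, which is available by part (1) of Fact~\ref{conjugacy}. The central observation is that if $G$ has a \emph{finite} Sylow $p$-subgroup, then every Sylow $p$-subgroup must in fact be periodic — indeed, finite — and so all $p$-subgroups live inside the \emph{locally finite radical} or, more precisely, one can hope to run the entire argument inside a periodic subgroup where Platonov's theorem applies directly.

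First I would fix a finite Sylow $p$-subgroup $P$ of $G$ and let $Q$ be an arbitrary Sylow $p$-subgroup; the goal is to show $Q$ is conjugate to $P$ (and hence finite). The key structural input is Fact~\ref{lf} (Schur's theorem): any periodic linear group is locally finite. Since $Q$ is a $p$-subgroup it is periodic, and being a subgroup of the linear group $G$ it is itself linear; therefore $Q$ is locally finite. The delicate point, and where I expect the main obstacle, is to establish that $Q$ is not merely locally finite but \emph{finite}. Finiteness of one Sylow $p$-subgroup $P$ does not on its face bound the size of a different Sylow $p$-subgroup $Q$, so I would need an argument controlling $Q$ using the pseudofinite hypothesis together with the linear (hence $\mathfrak{M}_c$) structure: the descending chain condition on centralizers constrains the possible configurations of $p$-elements, and one expects a bounded-rank or definability argument (via \los ~applied to a formula asserting the existence of a large elementary abelian or large finite $p$-subgroup) to transfer the finiteness of one Sylow $p$-subgroup to a uniform bound on all of them.

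Granting finiteness of $Q$, I would then work inside a suitable periodic subgroup. Concretely, let $H \leq G$ be the subgroup generated by all $p$-elements of $G$, or alternatively the locally finite radical; the point is to produce a periodic linear subgroup $H$ of $G$ containing representatives of the conjugacy classes in question and containing both $P$ and a conjugate of $Q$. Inside $H$, which is periodic and linear, Platonov's theorem (Fact~\ref{conjugacy}(1)) applies and yields that the Sylow $p$-subgroups of $H$ are conjugate in $H$. I would then verify that $P$ and $Q$ remain \emph{maximal} $p$-subgroups when viewed inside $H$ — this requires that maximality in $G$ descends to maximality in $H$, which holds provided $H$ absorbs all $p$-elements — so that $P$ and $Q$ are genuine Sylow $p$-subgroups of $H$ and are therefore $H$-conjugate, a fortiori $G$-conjugate.

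The main obstacle, to reiterate, is the finiteness transfer: bounding the order of an arbitrary Sylow $p$-subgroup $Q$ by the order of the given finite one $P$. I expect this to be the heart of the paper and to be where the pseudofinite hypothesis is genuinely used, as opposed to the purely group-theoretic reduction. One natural route is to express ``$G$ contains a finite $p$-subgroup of order at least $p^{k}$'' by a first-order formula and, using $G \equiv \prod_{i \in I} G_i / \mathcal{U}$ together with \los, pull the finiteness of $P$ back to a uniform bound $p^{k_0}$ holding in almost all finite factors $G_i$; since in each finite $G_i$ all Sylow $p$-subgroups have equal order by the classical Sylow theorems, this uniform bound would then force every $p$-subgroup of $G$ to have order at most $p^{k_0}$, giving finiteness of $Q$ and completing the reduction to Platonov.
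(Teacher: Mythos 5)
Your proposal does not address the statement it is meant to prove. The statement in question is Fact~\ref{wagnerconj}, Wagner's Theorem 1.5.4: conjugacy of Sylow $p$-subgroups of a \emph{substable} group under one of four hypotheses ($p=2$ and ABD; soluble-by-finite and ABD; $p=2$ and periodic; any two $p$-elements generate a finite subgroup). Your text never engages with substability, the ABD property, or any of these four conditions; instead you explicitly announce and sketch a proof of the paper's Theorem~\ref{bounded2} (conjugacy of Sylow $p$-subgroups of a linear pseudofinite group admitting a finite Sylow $p$-subgroup). These are genuinely different theorems: Wagner's result is a stability-theoretic statement, proved in \cite{wagner2} by arguments about relatively definable subgroups and the ABD decomposition, and the present paper quotes it as a known fact \emph{without proof} --- so there is no internal proof to compare against, and your machinery (linearity, pseudofiniteness, \los) has no purchase on it: a substable group need not be linear or pseudofinite, and none of Wagner's four hypotheses follows from yours.

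Even read charitably as an attempt at Theorem~\ref{bounded2}, the middle of your argument has a flaw the paper avoids. Your reduction to Platonov's theorem requires a \emph{periodic} linear subgroup $H$ containing all relevant $p$-elements, and you propose the subgroup generated by all $p$-elements of $G$; but that subgroup need not be periodic --- already two involutions can generate an infinite dihedral group, so ``absorbing all $p$-elements'' does not produce a periodic group, and maximality of $P$ and $Q$ inside $H$ is then moot. The paper's actual proof of Theorem~\ref{bounded2} never invokes Platonov: it uses \los\ to show that almost all finite factors $G_i$ have Sylow $p$-subgroups of order exactly $p^k$ (via the sentence asserting a subgroup of order $p^k$ not contained in one of order $p^{k+1}$), deduces that $G$ has no finite $p$-subgroup of order exceeding $p^k$, rules out infinite Sylow $p$-subgroups by Schur's theorem (an infinite locally finite $p$-group contains finite $p$-subgroups of order exceeding $p^k$), and finally transfers the first-order sentence ``any two subgroups of order $p^k$ are conjugate'' from the $G_i$ back to $G$. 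Your closing paragraph essentially rediscovers this transfer argument, which shows the Platonov detour is not only unsound but unnecessary; still, none of this constitutes a proof of Fact~\ref{wagnerconj}.
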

Note that a group $G$ has the \emph{ABD property} if every relatively definable
abelian section  decomposes as a sum of a divisible group  and a group of
bounded exponent (see Definition 1.5.1 in \cite{wagner2}).

In the case of finite Morley rank groups only the following conjugacy result is known.

\begin{fact}[Alt\i nel, Borovik and Cherlin   \cite{abc}]
Sylow $2$-subgroups of a group of finite Morley rank are conjugate.
\end{fact}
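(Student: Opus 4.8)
The plan is to work entirely inside the category of groups of finite Morley rank, using the Morley rank as a dimension, the descending chain condition on definable subgroups, the existence of a definable connected component $G^{\circ}$ of finite index, and the notion of the definable hull of a subgroup. First I would reduce to the connected case and pin down the structure of a Sylow $2$-subgroup $S$. Since $S$ is a $2$-subgroup maximal under inclusion, one shows that its connected component $S^{\circ}$ is a definable connected nilpotent $2$-group and that $S/S^{\circ}$ is finite. Using the structure of connected nilpotent groups of finite Morley rank (a central product of a divisible part and a definable subgroup of bounded exponent), I would write $S^{\circ}=U*T$, where $T$ is a maximal \emph{$2$-torus} — a divisible abelian group isomorphic to a finite direct sum of copies of $C_{2^{\infty}}$ — and $U$ is a definable connected $2$-group of bounded exponent (the ``unipotent'' part). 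This splits the conjugacy problem into a toral layer, a unipotent layer, and a finite layer.

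The heart of the argument, and where the main obstacle lies, is the conjugacy of maximal $2$-tori. A $2$-torus is contained in a \emph{decent torus}, that is, a definable connected divisible abelian group equal to the definable hull of its torsion, so I would invoke the genericity theory of decent tori in a connected group of finite Morley rank: the union of the conjugates of the centralizer of a maximal decent torus is \emph{generic} (of full Morley rank), and as a consequence maximal decent tori are conjugate. A rank count of the form ``two generic subsets must meet'' then forces the maximal $2$-tori $T_1,T_2$ of two Sylow $2$-subgroups to be conjugate. Establishing this genericity statement, together with the near-uniqueness of the torus containing a generic element, is the deep core of the whole theorem; every other step is organised around it.

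Having conjugated so that the two given Sylow $2$-subgroups $S_1,S_2$ share a common maximal $2$-torus $T$, I would finish by descending into a smaller definable group and inducting on Morley rank. The remaining data $U_i$ and the finite quotients $S_i/S_i^{\circ}$ are controlled inside $C_G(T)$ and $N_G(T)$: a Frattini-type argument transfers conjugacy from $N_G(T)$ back to $G$, induction on rank applied inside $C_G(T)$ conjugates the bounded-exponent unipotent parts $U_1,U_2$, and an ordinary finite Sylow theorem applied in a suitable finite section conjugates the finite tops $S_1/S_1^{\circ}$ and $S_2/S_2^{\circ}$. Assembling these three layers produces a single element of $G$ conjugating $S_1$ onto $S_2$, which proves that all Sylow $2$-subgroups of a group of finite Morley rank are conjugate.
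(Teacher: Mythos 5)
The paper does not prove this statement at all: it is quoted as a Fact with a citation to \cite{abc} (the result goes back to Borovik and Poizat), so there is no internal proof to compare yours against, and I can only judge your sketch on its own terms. As a roadmap it does reproduce the architecture of the standard modern treatment: the Borovik--Poizat structure theorem writing $S^{\circ}$ as a central product of a definable connected bounded-exponent part $U$ and a $2$-torus $T$ (which itself rests on the nontrivial theorem that Sylow $2$-subgroups of a group of finite Morley rank are locally finite and nilpotent-by-finite), followed by conjugation of the toral layers and a Frattini-plus-induction cleanup. Invoking Cherlin's generosity theorem for maximal decent tori is legitimate as a citation, though you should be aware it postdates the theorem you are proving by some fifteen years; the original proof conjugates $2$-tori by different means (rigidity of tori, i.e.\ finiteness of $N(T)/C^{\circ}(T)$, and a direct induction), so your route is a genuinely anachronistic reorganization rather than a reconstruction of the original argument.

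The concrete gap is in your final descent step, and it is not cosmetic. Your induction ``inside $C_G(T)$'' reduces the Morley rank only when $C_G(T)$ is a \emph{proper} definable subgroup. If $T=1$ --- that is, if $S^{\circ}=U$ has bounded exponent, a configuration you cannot exclude a priori (ruling out involutions in connected degenerate-type groups is a far deeper and much later theorem, and in any case irrelevant here since $G$ is arbitrary) --- then $C_G(T)=N_G(T)=G$, the entire toral layer of your argument is vacuous, and neither the Frattini step nor the rank induction makes any progress: conjugacy of maximal bounded-exponent $2$-subgroups is then the whole problem, untouched. The same failure occurs whenever $T$ is nontrivial but central, where one must instead pass to $G/T$ and lift Sylow $2$-subgroups through the quotient, which requires a torsion-lifting argument you have not supplied. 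A second unproved step: you assume the maximal $2$-torus of a Sylow $2$-subgroup is a maximal $2$-torus of $G$, but a priori a strictly larger $2$-torus could sit inside some \emph{other} maximal $2$-subgroup; in the standard development this maximality is obtained together with, not prior to, the conjugacy theorem, so using it as an input is circular as stated. In short: your proposal is an accurate high-level map of how the known proof is organized, but the unipotent case and the central-torus descent are exactly where the real work lives, and as written they are missing.
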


\subsection{Non-conjugacy results}

Below, we list some examples of families of groups with non-conjugate Sylow $p$-subgroups.

\begin{itemize}
  \item There are linear algebraic groups with  non-conjugate Sylow $p$-subgroups.

\noindent
  $\sll_2(\mathbb{Q})$ is an example given by Platonov in \cite{platonov}. Note that this example is analyzed in detail in the paper \cite{kow} of the author. 

\item There are locally finite groups with non-conjugate Sylow $p$-subgroups.

\noindent Direct product of Sym$(3)$'s is an example given by Dixon (Example 2.2.6 in \cite{dixon1994sylow}). This group is a metabelian locally finite group with non-conjugate Sylow $2$-subgroups.
  \item There are linear pseudofinite groups with non-conjugate Sylow $2$-subgroups (see Example~\ref{ex2}).
\end{itemize}

\section{Some results and examples in the case of pseudofinite groups}
 In this section, we prove the main result of this paper, namely, if a linear pseudofinite group has a finite Sylow $p$-subgroup, then all Sylow $p$-subgroups are conjugate.  We will also construct  examples of linear pseudofinite groups with conjugate and non-conjugate Sylow $2$-subgroups.

\subsection{Existence of finite Sylow $p$-subgroups}
Let  $(G_i)_{i\in I}$ be a family of finite groups. The first naive candidate for a Sylow $p$-subgroup of $\prod_{i\in I} G_i/\mathcal{U}$ is the ultraproduct of Sylow $p$-subgroups of the finite groups $G_i$. However, this is generally not the case. Such an ultraproduct is usually not even periodic.  For example, if we  take a non-principle ultraproduct of the cyclic groups $(C_{p^i})_{i\in \mathbb{N}}$ then clearly the ultraproduct of Sylow $p$-subgroups of $C_{p^i}$'s is not even a $p$-group; it has elements of infinite order. Actually, it is not difficult to observe that the Sylow $p$-subgroup of $\prod_{i\in \mathbb{N}}  C_{p^i}/ \mathcal{U}$ is isomorphic to the Pr\"{u}fer $p$-group  $C_{p^{\infty}}$ since clearly  $C_{p^{\infty}}$ embeds in the ultraproduct and the ultraproduct has unique elements of order $p^i$ for each $i$. So, in general the structure and behaviour of the Sylow $p$-subgroups can get complicated in pseudofinite groups. However, as  the following results suggest, existence of a finite Sylow $p$-subgroup guarantees the conjugacy of all Sylow $p$-subgroups.

\begin{prop}\label{bounded1}
Let $(G_i)_{i\in I}$ be a family of finite groups and   $G \equiv \prod_{i\in I} G_i / \mathcal{U}$ be a pseudofinite group. If $G$ has a Sylow $p$-subgroup of order $p^k$ for some prime number $p$ and positive integer $k$,  then the following holds.
\begin{enumerate}
  \item Sylow $p$-subgroups of  $G_i$ have order $p^k$ for almost all $i\in I$.
  \item There are no finite subgroups of $G$ of order $p^l$ for any $l > k$.
  \item All finite Sylow $p$-subgroups of $G$ have order $p^k$ and they are conjugate.
\end{enumerate}
\end{prop}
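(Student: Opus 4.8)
The plan is to reduce each clause to a \emph{first-order} statement about $G$ and then move it back and forth between $G$ and almost all $G_i$ using the elementary equivalence $G\equiv\prod_{i\in I}G_i/\mathcal U$ together with \los{}: a first-order sentence $\theta$ holds in $G$ iff it holds in the ultraproduct iff it holds in $\mathcal U$-almost all $G_i$. The only hypothesis that is not already elementary is that the given Sylow $p$-subgroup $P$ is \emph{maximal}, so the first task is to extract a finitary consequence of maximality. Since $|P|=p^k$ is a fixed number, I would write down the first-order sentence
\[
\chi_k:\quad \exists\vec a\ \bigl(\vec a\text{ forms a subgroup of order }p^k\bigr)\wedge\neg\exists\vec b\ \bigl(\vec b\text{ forms a subgroup of order }p^{k+1}\text{ containing }\vec a\bigr),
\]
and observe that $\chi_k$ holds in $G$: the witness is $P$ itself, since any subgroup of order $p^{k+1}$ containing $P$ would be a $p$-group properly containing the maximal $p$-subgroup $P$, which is impossible.

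For part (a), I would transfer $\chi_k$ to almost all $G_i$ and interpret it there, using the finite-group fact that a $p$-subgroup $A$ of a finite group with $|A|=p^k$ which is \emph{not} Sylow is always contained in a subgroup of order $p^{k+1}$ (choose a Sylow $S\supsetneq A$; in the finite $p$-group $S$ one has $A<N_S(A)$, and pulling back an order-$p$ subgroup of $N_S(A)/A$ gives a subgroup of order $p^{k+1}$ containing $A$). Hence any witness $A_i$ to $\chi_k$ in $G_i$ must already be a Sylow $p$-subgroup, so the Sylow $p$-subgroups of $G_i$ have order exactly $p^k$ for almost all $i$. Part (b) is then immediate: almost all $G_i$ have no subgroup of order $p^{k+1}$, so the first-order sentence $\neg\psi_{k+1}$ (``there is no subgroup of order $p^{k+1}$'') holds in $G$, and a finite subgroup of order $p^l$ with $l>k$ would contain one of order $p^{k+1}$, a contradiction.

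For part (c) I would treat the order claim and conjugacy separately. If $Q$ is a finite Sylow $p$-subgroup with $|Q|=p^m$, then $m\le k$ by part (b); if $m<k$, maximality of $Q$ makes $\chi_m$ hold in $G$, which by the same interpretation forces the Sylow $p$-subgroups of almost all $G_i$ to have order $p^m<p^k$, contradicting part (a). Thus every finite Sylow $p$-subgroup has order $p^k$. For conjugacy I would transfer the first-order sentence $\rho_k$ (``any two subgroups of order $p^k$ are conjugate''), which holds in almost all $G_i$ because there every subgroup of order $p^k$ is a Sylow $p$-subgroup and Sylow $p$-subgroups of finite groups are conjugate; hence $\rho_k$ holds in $G$, and the finite Sylow $p$-subgroups, now all of order $p^k$, are pairwise conjugate.

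The main obstacle, and the conceptual core, is this first step: replacing the non-elementary maximality of $P$ by the genuinely first-order sentence $\chi_k$ and proving the finite-group lemma that lets $\chi_k$ pin down the Sylow order; everything afterwards is routine transfer via \los{}. A secondary point requiring care is that $G$ is only \emph{elementarily equivalent} to the ultraproduct, so each argument must be phrased through an honest first-order sentence ($\chi_k$, $\psi_{k+1}$, $\rho_k$) rather than through an assertion about the specific tuple $P$.
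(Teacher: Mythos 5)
Your proposal is correct and takes essentially the same approach as the paper: both extract maximality of the given Sylow $p$-subgroup as the first-order sentence ``there is a subgroup of order $p^k$ not contained in one of order $p^{k+1}$,'' transfer it via \los\ to pin down the Sylow order of almost all $G_i$, and then transfer back the non-existence of larger $p$-subgroups and the conjugacy of subgroups of order $p^k$. The only differences are cosmetic: you spell out the normalizer-growth lemma that the paper compresses into ``Sylow's first theorem,'' phrase part (b) contrapositively, and in part (c) contradict maximality at order $p^{m+1}$ where the paper uses order $p^k$.
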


\begin{proof} $(a)$ It is easy to observe that the following statement is first order:
\vspace{0.2cm}

``There is a subgroup of order $p^k$ which is not contained in a subgroup of order $p^{k+1}.$"
\vspace{0.2cm}

\noindent Since this first order sentence holds in $G$ by the assumption,    almost all of the groups $G_i$ have a subgroup of order $p^k$ which is not contained in a subgroup of order $p^{k+1}$  by \los. Therefore,    the order of Sylow $p$-subgroups of almost all $G_i$'s is $p^k$ by Sylow's first theorem.

\vspace{0.2cm}

\noindent $(b)$ Assume on the contrary that $G$ has a subgroup of order $p^l$ for some $l > k$. Then again by \los \ almost all $G_i$ in the ultraproduct has subgroups of order $p^l$. This is not possible since their Sylow $p$-subgroups have order $p^k$ by (a) and $p^l > p^k$.

\vspace{0.2cm}

\noindent $(c)$  Let $P$ be a finite Sylow $p$-subgroup of $G$.  Clearly, $|P| \leqslant p^k$ by (b). If $|P|= p^l < p^k$ then this means that $P$ is not contained in a subgroup of order $p^k$ (since it is maximal finite $p$-group) and this would yield  subgroups of order $p^l$ for almost all $G_i$ in the ultraproduct which are not contained in  groups of order $p^k$. This contradicts to the fact that  Sylow $p$-subgroups of $G_i$'s have order $p^k$. For the conjugacy,  note that the following statement is also first order:

\vspace{0.2cm}

``Any two subgroups of order $p^k$ are conjugate."

\vspace{0.2cm}

\noindent Since the Sylow $p$-subgroups of almost all $G_i$ are of order $p^k$, the first order sentence above holds in the ultraproduct and hence in $G$ (by \los).
\end{proof}

\begin{theorem}\label{bounded2}
Let   $G$ be a linear pseudofinite group. If  $G$ has a finite  Sylow $p$-subgroup then all Sylow $p$-subgroups of $G$  are conjugate.
\end{theorem}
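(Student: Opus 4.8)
My plan is to reduce the theorem to the already-established Proposition~\ref{bounded1}, whose part (c) handles conjugacy as soon as every Sylow $p$-subgroup is known to be finite. So the genuine content is to show that \emph{every} Sylow $p$-subgroup of $G$ is finite; the conjugacy is then immediate.

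For the setup, the given finite Sylow $p$-subgroup has order $p^k$ for some $k$, and I may assume $k \geqslant 1$, since otherwise $G$ has no nontrivial $p$-element and there is nothing to prove. Writing $G \equiv \prod_{i\in I} G_i/\mathcal{U}$, Proposition~\ref{bounded1} applies; in particular part (b) supplies the crucial uniform bound: $G$ has no finite subgroup of order $p^l$ for any $l > k$, so every finite $p$-subgroup of $G$ has order at most $p^k$.

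For the main step, let $P$ be an arbitrary Sylow $p$-subgroup of $G$. I would combine two structural facts. First, $P \leqslant G \leqslant \gl_n(F)$ is a linear group, and being a $p$-group it is periodic; hence by Schur's theorem (Fact~\ref{lf}) it is locally finite, so every finitely generated subgroup of $P$ is finite. Second, as just noted, every finite subgroup of $P$ has order at most $p^k$. To conclude that $P$ is finite I would argue by contradiction: if $|P| > p^k$, choose $p^k+1$ distinct elements $g_1,\dots,g_{p^k+1}$ of $P$; the subgroup $H = \gnl g_1,\dots,g_{p^k+1}\gnr \leqslant P$ is finitely generated, hence finite by local finiteness, and is a $p$-group of order at least $p^k+1 > p^k$, contradicting Proposition~\ref{bounded1}(b). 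Therefore $|P| \leqslant p^k$ and $P$ is finite. Since $P$ was arbitrary, all Sylow $p$-subgroups of $G$ are finite, and Proposition~\ref{bounded1}(c) then yields that they are conjugate (and of common order $p^k$).

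The step I expect to be the crux is the simultaneous use of the two inputs. Neither suffices on its own: linearity only delivers local finiteness, which is perfectly consistent with an infinite locally finite $p$-group such as $C_{p^\infty}$, while pseudofiniteness bounds the \emph{finite} $p$-subgroups but a priori says nothing about infinite ones. It is precisely the uniform order bound $p^k$ on finite subgroups, fed into local finiteness via the counting argument above, that forces each Sylow $p$-subgroup to be finite. Everything else is routine bookkeeping.
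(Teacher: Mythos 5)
Your proposal is correct and follows essentially the same route as the paper: reduce to Proposition~\ref{bounded1}, then rule out an infinite Sylow $p$-subgroup $P$ by picking $p^k+1$ distinct elements of $P$, invoking Schur's theorem (Fact~\ref{lf}) to see the subgroup they generate is a finite $p$-group of order exceeding $p^k$, contradicting Proposition~\ref{bounded1}(b), after which part (c) gives conjugacy. The only (harmless) cosmetic difference is that you phrase the contradiction as $|P|>p^k$ rather than ``$P$ infinite'' and you note the trivial case $k=0$ explicitly.
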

\begin{proof}
Suppose $G$ has a Sylow $p$-subgroup of order $p^k$. Thanks to  Proposition~\ref{bounded1}, it is enough to  show that $G$ has no infinite Sylow $p$-subgroups.   Assume on the contrary that $G$ has an infinite  Sylow $p$-subgroup, say $P$.   Let $x_1, x_2, \ldots, x_{p^k}, x_{p^k+1}$ be distinct elements of $P$ and define  $S:=\langle x_1, x_2, \ldots, x_{p^k}, x_{p^k+1} \rangle$. Since $G$ is a linear group, $P$ is locally finite (see Fact~\ref{lf}) and so  $S$ is a finite $p$-subgroup of $G$ whose order is strictly bigger than $p^k$. This is not possible by Proposition~\ref{bounded1}(b).     As a result, all Sylow $p$-subgroups of $G$ are finite and conjugacy follows by Proposition~\ref{bounded1}(c).
\end{proof}

\begin{remark}\label{remarkbound}  By Proposition~\ref{bounded1} and the proof of Theorem~\ref{bounded2}, we get the following results.
\begin{enumerate}
  \item There can not be both finite and infinite Sylow $p$-subgroups in a linear pseudofinite group. We do not know whether this holds generally for pseudofinite groups or not.
  \item Let $G \equiv \prod_{i\in I} G_i / \mathcal{U}$ be  a linear pseudofinite group. Then $G$ has a finite Sylow $p$-subgroup if and only if there is a finite global bound on the order of Sylow $p$-subgroups of almost all $G_i$.
\end{enumerate}

\end{remark}

\begin{remark}
The assumption of linearity in Theorem~\ref{bounded2} can be weaken to the $\mathfrak{M}_c$-property for $p=2$ since Sylow $2$-subgroups of $\mathfrak{M}_c$-groups are known to be locally finite (see Corollary 2.4 in \cite{wagner1}). Therefore,  Theorem~\ref{bounded2} gives an alternative proof of the result of the author in \cite{kow}.
\end{remark}

\subsection{(Non-)Conjugacy examples in linear pseudofinite groups}

In this section, we present two examples of linear pseudofinite groups, namely, $\gl_2(F)$ over different pseudofinite fields $F$, such that the  Sylow $2$-subgroups are conjugate in the first example while conjugacy of Sylow $2$-subgroups fails in the second one.

\begin{example} \label{ex1} Let $I=\{p \ \text{prime} ~|~ p \equiv 3 \ (\text{mod}\ 8)\}$ and $F = \prod_{p\in I} \mathbb{F}_{p} / \mathcal{U}$ where $\mathcal{U}$ is a non-principal ultrafilter on $I$. Since there are infinitely many primes in $I$  (by Dirichlet's theorem on arithmetic progression),   $F$ is a pseudofinite field of characteristic $0$. Let us consider the general linear group $\gl_2(F)$.    Then we have  $\gl_2(F) \equiv \prod_{p\in I} \gl_2(\mathbb{F}_p) / \mathcal{U}$ and  therefore, $\gl_2(F)$ is a linear pseudofinite  group.

 It is not difficult to observe that the group $\gl_2(F)$, where $F$ is as above,  has finite Sylow $2$-subgroups. To see this first note that  the order of $\gl_2(\mathbb{F}_p)$ is $(p^2-1)(p^2-p)$ and hence, the powers of $2$ dividing $p-1$ and $p+1$ determine the order of the Sylow $p$-subgroup of  $\gl_2(\mathbb{F}_p)$. Since $p \equiv 3 \ (\text{mod}\ 8)$, clearly $4 \nmid p-1$ while $2 \mid p-1$. Because of the same reason $8 \nmid p+1$ while $4 \mid p+1$. Therefore, the order of the Sylow $2$-subgroups of all the groups $\gl_2(\mathbb{F}_p)$ in the ultraproduct is $16$.  Since  this finiteness property  is transferred to the ultraproduct (see Remark~\ref{remarkbound}(b)), Sylow $2$-subgroups of the group  $\gl_2(F)$ are finite and conjugate.
 \end{example}
\begin{remark}
We could also  observe the conjugacy of Sylow $2$-subgroups of the pseudofinite group in Example~\ref{ex1} by referring to Volvachev conditions. Note that   ${\bf (V_1)}$ is a well-known property of finite fields. Since it is a first order property, ${\bf (V_1)}$ is  satisfied by any pseudofinite field. Moreover, by the  construction and \los, $F$ satisfies ${\bf (V_2)}$ as well. However,  ${\bf (V_3)}$ is not satisfied by $F$.  To see this consider  $\gl_2(\mathbb{F}_p)$ where $p  \in I$. Clearly we have   $$|\mathbb{F}_p(i)^*|=p^2-1=(p-1)(p+1)$$ where $i$ is an element of multiplicative order $4$. Since $p \equiv 3 \ (\text{mod}\ 8)$, as we have mentioned above, $8$ divides the order of the cyclic group $\mathbb{F}_p(i)^*$. So, there exist $a, b \in \mathbb{F}_p$ such that $a+bi$ has multiplicative order $8$. Now let us see why  we should have $(a+bi)(a-bi)=-1$. Firstly,  $(a+bi)(a-bi)=\pm 1$ since this product is a $2$-element in $\mathbb{F}_p^*$ and  $\mathbb{F}_p^*$ has no element of order $4$.  Now consider the following two automorphisms of $\mathbb{F}_p(i)$: $$ a+bi \mapsto a-bi, \ \ \ \ \ \ \ \  \ \ a+bi \mapsto (a+bi)^p.$$ Clearly, these two automorphisms are non-trivial elements of the Galois group Gal$(\mathbb{F}_p(i)/\mathbb{F}_p)$. Since this Galois group has order $2$, it has only one non-trivial element, that is  we have $a-bi=(a+bi)^p$ which gives:
$$(a+bi)(a-bi)=(a+bi) (a+bi)^p= (a+bi)^{8k+4}=(a+bi)^4=-1$$ since $p=8k+3$ for some integer $k$ and $a+bi$ has multiplicative order $8$. As a result we found elements $a,b$ in $\mathbb{F}_p$ such that $a+bi$ has multiplicative  order $8$ and $(a+bi)(a-bi)=-1$. Since this is true for any field $\mathbb{F}_p$ in the ultraproduct,  there are such  elements in $F$ by \los. As a result we can conclude that Sylow $2$-subgroups of the group in our example are conjugate.
\end{remark}

\begin{remark} Any field of positive characteristic satisfying condition ${\bf (V_2)}$  can not satisfy condition ${\bf (V_3)}$ (see \cite{volvachev2}). In particular no finite field satisfies ${\bf (V_2)}$ and ${\bf (V_3)}$ at the same time. Volvachev's argument about positive characteristic fields works well for pseudofinite fields of characteristic $0$, provided that there is a
 bound on the order of $2$-elements of $F(i)^*$. This is the case for the pseudofinite field $F$ in Example~\ref{ex1}.
\end{remark}

In the next example we observe that there are  linear pseudofinite groups in which Sylow $2$-subgroups are not conjugate. This happens when there is no bound on the  order of $2$-elements of $F(i)^*$ (that is when $C_{2^{\infty}}$ embeds in $F(i)^*$).

\begin{example}\label{ex2}
 We will construct a pseudofinite field $F$ satisfying the Volvachev conditions ${\bf (V_2)}$ and ${\bf (V_3)}$ (note that ${\bf (V_1)}$ is automatic for pseudofinite fields). Let $$I=\{p \ \text{prime} ~|~ p \equiv 3 \ (\text{mod}\ 4)\}.$$  For each positive integer $N$, we can find a prime number $p \in I$ such that $2^N$ divides $p+1$ by Dirichlet's theorem on arithmetic progressions (actually there are infinitely many such primes for each $N$). Let us denote this infinite subset of $I$ by $J$ and define  $F=\prod_{p\in J} \mathbb{F}_{p} / \mathcal{U}$ where $\mathcal{U}$ is a non-principal ultrafilter on $J$. Clearly ${\bf (V_2)}$ is satisfied in $F$ since each field in the ultraproduct has this first order property. Moreover, by our construction, $2^N$   divides $|\mathbb{F}_p(i)^*|=(p-1)(p+1)$ for almost all $p\in J$ for each positive integer $N$. As a result  $C_{2^\infty}$ embeds in $F(i)^*$.  Now assume that ${\bf (V_3)}$ is not satisfied, that is, $(a+bi)(a-bi)=-1$ for some $2$-element $a+bi$ in $F(i)^*$. Since $C_{2^\infty}$ embeds in $F(i)^*$,  there is a $2$-element $c+di$ such that $(c+di)^2=a+bi$. But then we have $(c+di)^2 (c-di)^2 = -1$, that is, $(c^2+d^2)^2=-1$. But this contradicts to our assumption that $F$ has no element of order $4$. Therefore the pseudofinite linear group $\gl_2(F)$, where $F$ is constructed as above, has non-conjugate Sylow $2$-subgroups by Volvachev's theorem.
\end{example}

\begin{remark}
It is not difficult to observe that the group $\gl_2(F)$ constructed in   Example~\ref{ex2} has infinite Sylow $2$-subgroups. As mentioned in Remark~\ref{remarkbound}(b) it is enough to observe that   there is no bound on the orders of the Sylow $2$-subgroups of the groups $\gl_2(\mathbb{F}_p)$ in the ultraproduct. This follows easily since the powers of $2$-divisors of $p+1$ grow in the ultraproduct by construction.

\end{remark}
We would like to finish this paper with a comment about an open problem which is stated by Wagner (Remark 1.5.1 in \cite{wagner2}).

In \cite{wagner2}, Wagner points out that ``The question of how conjugacy of the
maximal $p$-subgroups behaves under elementary equivalence is still
open". The connection of our results to this question may be explained as follows. For convenience, let $\mathfrak{S}$ denote the class of groups in
which Sylow $p$-subgroups are conjugate for each prime $p$.
\begin{enumerate}
  \item Our Example~\ref{ex2} shows that the class $\mathfrak{S}$ is not closed
under ultraproducts.
  \item  It is well-known that a class is elementary (that is: axiomatizable
by a first-order theory) if and only if it is closed under
ultraproducts and elementary equivalence. Therefore, the class
$\mathfrak{S}$ is not  elementary by (a).
  \item  The observations mentioned in (a) and (b) above suggest that
 the answer to the question stated by Wagner  is negative. However, we do not have an example of two
elementarily equivalent groups such that one belongs to $\mathfrak{S}$ while the other does not.
\end{enumerate}

\noindent
{\it Acknowledgements}. \  I would like to thank Zo\'e Chatzidakis for her valuable comments related to Example~\ref{ex2}. This work has been completed during my sabbatical leave in  2022--2023 Academic Year which I have been spending at Wroc{\l}aw University.

\bibliographystyle{plain}
\bibliography{refref}
\end{document}